\theoremstyle{plain}
\newtheorem{theorem}{Theorem}
\newtheorem{corollary}[theorem]{Corollary}
\newtheorem{proposition}[theorem]{Proposition}
\newtheorem{identity}[theorem]{Identity}
\theoremstyle{definition}
\newtheorem{example}[theorem]{Example}
\theoremstyle{remark}
\newtheorem{remark}[theorem]{Remark}
\title{Arithmetics of some  sequences via $2$-determinants} 
\author{Dusko Bogdanic and Milan Janji\'c\footnote{Department of Mathematics and Computer Science, 
University of Banja Luka, Bosnia and Herzegovina 
dusko.bogdanic@pmf.unibl.org, milan.janjic@pmf.unibl.org}}
\date{} 
\begin{document}

\maketitle

\begin{abstract} 
We extend our investigation of $2$-determinants, which we
defined in a previous paper. For a linear homogenous recurrence of the second
order, we consider relations between different sequences satisfying the same linear homogeneous recurrence of
the second order. After we prove a generalized identity of d'Ocagne, we derive, from a single identity,  a number of classical identities (and their generalizations) such as d'Ocagne's, Cassini's, Catalan's, and Vajda's. Along the way, the corresponding combinatorial interpretations in terms of restricted words over a finite alphabet are stated for  the sequences we investigate. 
\end{abstract}

\maketitle

\section{Introduction and preliminaries}
We continue  our investigation of $2$-determinants,
defined in \cite{ja1}. For a linear homogenous recurrence of the second order, we consider
relations between different sequences satisfying the same recurrence. Linear homogenous recurrences of the second order are much studied objects and there is an overwhelming amount of literature containing various formulas involving sequences defined recurrently (as an introduction to the topic, we recommend \cite{Va}, \cite{JO}, \cite{BQ},  and \cite{Du}).

Our method produces a number of identities involving Fibonacci numbers and polynomials, bisection of Fibonacci numbers, positive integers,  Pell numbers, Jacobsthal numbers, Mersenne numbers, and Chebyshev polynomials of the second kind.
For these and some other objects, we derive variations   of well-known identities, such as d'Ocagne's, Cassini's, Vajda's, and Catalan's identities.

Let $x$ and $y$ be integer-valued variables.
We consider the following recurrence of the second order:
\begin{equation}\label{rr}a_{n+1}(x,y)=x\cdot a_{n}(x,y)+y\cdot a_{n-1}(x,y),\,\,\, n>0,\\
\end{equation}
where \begin{equation}\label{rr2} a_{0}(x,y)=0,\,\, a_1(x,y)=1. \end{equation}

We investigate mutual connections between three sequences $(a),(b),$ and $(c)$ which satisfy
(\ref{rr}), and where $(a)$ also satisfies (\ref{rr2}). Our main result is the following general identity. 
\begin{theorem}[A generalized identity of d'Ocagne] Let $(a)$ be a sequence satisfying Equation~(\ref{rr}) and Equation~(\ref{rr2}), and let
$(b)=(b_0,b_1,\ldots)$ and $(c)=(c_0,c_1,\ldots)$ both satisfy Equation~(\ref{rr}). Then
\begin{equation}\label{ocintr}
\begin{vmatrix}b_k&b_{k+m}\\c_k&c_{k+m}
\end{vmatrix}=(-y)^{k}\cdot a_{m}\cdot\begin{vmatrix}b_{0}&b_{1}\\c_{0}&c_{1}
\end{vmatrix}.
\end{equation}
\end{theorem}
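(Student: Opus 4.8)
The plan is to reduce the general determinant on the left-hand side to the ``base'' determinant $\begin{vmatrix} b_0 & b_1 \\ c_0 & c_1 \end{vmatrix}$ in two independent stages: one that controls the column separation $m$ and one that controls the common shift $k$.

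The first and most important stage is a shift formula asserting that for \emph{any} sequence $(s_n)$ satisfying Equation~(\ref{rr}), and for all $m \geq 1$,
\begin{equation*}
s_{k+m} = a_m\, s_{k+1} + y\, a_{m-1}\, s_k .
\end{equation*}
I would prove this by induction on $m$. The cases $m=1$ and $m=2$ follow directly from $a_0=0$, $a_1=1$, and $a_2=x$ together with Equation~(\ref{rr}); for the inductive step I would write $s_{k+m+1}=x\, s_{k+m}+y\, s_{k+m-1}$, substitute the inductive hypotheses for $s_{k+m}$ and $s_{k+m-1}$, and collect the coefficients of $s_{k+1}$ and $s_k$, recognizing them as $a_{m+1}=x a_m+y a_{m-1}$ and $y a_m = y(x a_{m-1}+y a_{m-2})$. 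This is the step where the special initial data of $(a)$ enters, and I expect it to be the main (if modest) obstacle; everything afterward is formal.

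With the shift formula in hand, I would apply it to both $(b)$ and $(c)$ to rewrite the right-hand column, obtaining
\begin{equation*}
\begin{vmatrix} b_k & a_m b_{k+1} + y a_{m-1} b_k \\ c_k & a_m c_{k+1} + y a_{m-1} c_k \end{vmatrix}.
\end{equation*}
Subtracting $y a_{m-1}$ times the first column from the second leaves the determinant unchanged and annihilates the $a_{m-1}$ terms, after which $a_m$ factors out of the second column. This reduces the claim to showing that the consecutive-column determinant $W_k = \begin{vmatrix} b_k & b_{k+1} \\ c_k & c_{k+1} \end{vmatrix}$ equals $(-y)^k W_0$.

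Finally, I would establish the one-step recursion $W_k = -y\, W_{k-1}$. Expanding $W_k = b_k c_{k+1} - b_{k+1} c_k$ and replacing $b_{k+1}$ and $c_{k+1}$ by $x b_k + y b_{k-1}$ and $x c_k + y c_{k-1}$, the $x$-terms cancel and what remains is $y(b_k c_{k-1} - b_{k-1} c_k) = -y\, W_{k-1}$. Iterating gives $W_k=(-y)^k W_0$, and combining this with the previous stage yields $a_m (-y)^k W_0$, which is exactly Equation~(\ref{ocintr}).
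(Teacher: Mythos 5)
Your proof is correct, but it takes a genuinely different route from the paper. The paper obtains the theorem as a one-line specialization of a more general result imported from an earlier work (Proposition 8 of the cited paper on words and linear recurrences), which treats recurrences $b_{n+1}=u_n b_n + v_{n-1}b_{n-1}$ with \emph{variable} coefficients; setting $u_n=x$, $v_n=y$, and $n+2=k+m$ immediately yields Equation~(\ref{ocintr}). You instead give a self-contained elementary argument built from two standard ingredients: the addition (shift) formula $s_{k+m}=a_m s_{k+1}+y\,a_{m-1}s_k$, proved by induction, and the one-step Wronskian recursion $W_k=-y\,W_{k-1}$ for $W_k=b_kc_{k+1}-b_{k+1}c_k$. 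All three steps check out (the column operation cleanly strips off $a_m$, and the $x$-terms do cancel in the Wronskian computation). It is worth noting that your logical order is essentially the reverse of the paper's: you prove a Cassini-like statement ($W_k=(-y)^kW_0$) first and deduce d'Ocagne from it, whereas the paper derives its Cassini-like identity as the special case $m=1$ of Equation~(\ref{oc}). What the paper's route buys is generality (the variable-coefficient version) and brevity given the earlier reference; what yours buys is a proof readable without consulting the other paper. One small loose end: your shift formula is stated for $m\ge 1$, so you should remark that the case $m=0$ of the theorem is trivial, both sides being zero since $a_0=0$ and the two columns coincide.
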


Once we establish the identity (\ref{ocintr}), from this single identity we will derive a number of identities involving some classical sequences of numbers.  We  note that this formula is, in a sense, asymmetric, since  factors $(-y)^k$
and $a_m$ appear only on the right-hand side of the equation. Moreover, on
the right-hand side  only two initial terms of sequences $(b)$ and $(c)$ appear.

We also note that the left-hand side of (\ref{ocintr}) contains two arbitrary parameters $k$
and $m$.  Furthermore,  in Theorem~\ref{fpt}, a formula containing four
different parameters will be proved. Extra parameters allow us to derive a number of identities
concerning classical sequences as special cases of this particular identity.

We see that the fundamental role in our investigation is played by sequences that satisfy (\ref{rr}) and (\ref{rr2}). In this section we closely investigate such sequences. In particular, we give explicit
formulas for them and combinatorial interpretations of such sequences in
terms of restricted words over a finite alphabet.

In the cases under consideration, $x$ and $y$ will always have fixed values, so that
we can write $a_n$ instead of $a_n(x,y)$, omitting $x$ and $y$ to simplify notation.

The following result is proved in Proposition~11, Proposition~12, and Proposition~16 in \cite{ja2}.

\begin{proposition} Let $(a)$  be a sequence satisfying (\ref{rr}) and (\ref{rr2}). 
\begin{enumerate}
    \item The following explicit formula holds
\begin{equation}\label{pp1}
a_n=\sum_{k=0}^{\lfloor\frac{n-1}{2}\rfloor}{n-1-k\choose k}\cdot x^{n-2k-1}\cdot
y^k.
\end{equation}
\item
    If $x > 0$ and $y > 0$, then $a_{n+1}$ equals the number
of words of length $n$ over the alphabet
$\{0,1,\ldots,x-1,x,\ldots,x+y-1\}$ in which letters $0,1,\ldots,x-1$
avoid runs of odd lengths.
\item
If $x>0$, $y<0$, and $-y<  x$, then $a_n$ is the number of words of length $n-1$
over
$\{0,1,\ldots, x-1\}$
with no subword of the form $0i$, where $i\in \{1,2,\ldots, -y\}$.
\end{enumerate}
\end{proposition}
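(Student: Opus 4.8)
The plan is to treat the three parts with a common philosophy: each asserted quantity satisfies the second-order recurrence (\ref{rr}) with the initial data (\ref{rr2}) (after the appropriate index shift), so that by the uniqueness of solutions of a linear two-term recurrence it must coincide with the corresponding $a_n$. Part~(1) I would instead obtain directly by generating functions, which is the quickest route to the closed form in (\ref{pp1}).

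For part~(1), I would introduce the ordinary generating function $A(t)=\sum_{n\ge 0}a_n t^n$ and read off from (\ref{rr}) and (\ref{rr2}) that $A(t)=t/(1-xt-yt^2)$. Expanding the denominator as a geometric series gives $A(t)=t\sum_{j\ge0}(xt+yt^2)^j$, and applying the binomial theorem to each summand produces terms $\binom{j}{k}x^{j-k}y^k t^{\,j+k+1}$. Extracting the coefficient of $t^n$ forces $j+k+1=n$, i.e. $j=n-1-k$, which yields exactly the summand $\binom{n-1-k}{k}x^{n-2k-1}y^k$ of (\ref{pp1}); the constraint $k\le j=n-1-k$ gives the upper limit $k\le\lfloor (n-1)/2\rfloor$. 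As an alternative I would verify (\ref{pp1}) by induction on $n$, the induction step reducing to Pascal's rule $\binom{n-1-k}{k}=\binom{n-2-k}{k}+\binom{n-2-k}{k-1}$.

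For parts~(2) and~(3) the cleanest strategy is to show the word counts obey $W_{n+1}=x\,W_n+y\,W_{n-1}$ together with $W_0=1$ and $W_1=x$, whence $W_n=a_{n+1}$. In part~(3), writing $V_m$ for the number of admissible words of length $m$ over $\{0,\dots,x-1\}$ avoiding every subword $0i$ with $i\in\{1,\dots,-y\}$, I would build a length-$(m+1)$ word by appending a letter to an admissible length-$m$ word: there are $x\,V_m$ such extensions, from which one must remove those that create a forbidden subword $0i$ at the right end. Such a bad extension occurs precisely when the word ends in $0$ and the appended letter lies in $\{1,\dots,-y\}$; since appending $0$ can never create a forbidden subword, the admissible words of length $m$ ending in $0$ are in bijection with all admissible words of length $m-1$, so there are $(-y)\,V_{m-1}$ bad extensions. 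This gives $V_{m+1}=x\,V_m-(-y)\,V_{m-1}=x\,V_m+y\,V_{m-1}$, matching (\ref{rr}). Part~(2) I would handle through the equivalent model of colored square--domino tilings of a length-$n$ strip, where a square carries one of $x$ colors and a domino one of $y$ colors; conditioning on whether the strip ends in a square or a domino gives the same recurrence, and encoding each domino as a constant pair of a designated letter realizes the tiling as a word in which the relevant letters occur only in even runs, which is the combinatorial description asserted.

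The main obstacle is in the combinatorial parts, and it is one of bookkeeping at the boundary: I must check that the last-letter (respectively last-tile) decomposition neither over- nor under-counts, i.e. that appending or removing the final block always preserves admissibility and that every admissible word arises exactly once. Getting the two coefficients to come out as precisely $x$ and $y$ hinges on this---on the fact that the restriction is local (a forbidden subword, respectively a run-parity condition) so that it interacts only with the final block---and on confirming the small initial cases $W_0=1$ and $W_1=x$ that pin down the solution via uniqueness.
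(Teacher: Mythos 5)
The paper itself does not prove this proposition; it is quoted verbatim from \cite{ja2} (Propositions 11, 12, and 16 there), so there is no in-paper argument to compare against. Your three arguments are the standard ones and are essentially correct: the generating-function computation $A(t)=t/(1-xt-yt^2)$ for part (1) is right (including the upper limit $k\le\lfloor (n-1)/2\rfloor$ coming from $k\le j$), and the last-letter deletion argument for part (3) is sound --- the points you correctly isolate are that deleting the final letter preserves admissibility, that appending $0$ never creates a forbidden factor (so admissible words ending in $0$ biject with admissible words one letter shorter), and that the initial values $V_0=1=a_1$ and $V_1=x=a_2$ pin the sequence down by uniqueness.

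One thing you should not gloss over in part (2): your square--domino encoding (squares in $x$ colors, dominoes in $y$ colors, each domino written as a doubled letter) produces words over an alphabet of $x+y$ letters in which the $y$ ``domino'' letters are the ones confined to even runs, while the remaining $x$ letters are unrestricted. That is \emph{not} what the printed statement says: it restricts the $x$ letters $0,\dots,x-1$. The two versions disagree whenever $x\neq y$; for instance, at $n=1$ the only admissible length-$1$ words are single unrestricted letters, so the count equals the number of free letters, which must be $a_2=x$ --- hence exactly $y$ letters must carry the run-parity restriction. Your argument therefore proves the corrected statement (the one consistent with the paper's own Pell and Jacobsthal examples, where $y$ of the $x+y$ letters avoid odd runs), and you should say so explicitly rather than asserting that the encoding realizes ``the combinatorial description asserted.'' This is a defect of the statement as printed, not of your proof, but a careful write-up must flag it.
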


Some well-known integer sequences  are given by a linear homogeneous recurrence of the second order, for instance, Fibonacci numbers,  Fibonacci polynomials,  Jacobsthal numbers, and Pell numbers. They are
 obtained   when $x>0,\, y>0$. The first example concerning the case $x>0,\, y<0$ shows that positive
 integers also satisfy (\ref{rr}).  Chebyshev
 polynomials of the second kind, bisection of Fibonacci numbers, and Mersenne numbers
 also belong to this class.

In the next example, Equation~(\ref{pp1}) is used to give an explicit expression for each of the classical sequences involved. These formulas are well known and most of them have been widely described in numerous literature (\cite{Va}, \cite{Ko}, \cite{BQ}, \cite{qF}, \cite{qQ}, \cite{Ho}, \cite{Ho2}). Here, ten examples are listed  in order to emphasize the idea that all of them hold a common origin in Equation~(\ref{pp1}). Along the way, the corresponding combinatorial interpretations are stated for each of the sequences.

We start with the most important two: Fibonacci polynomials and Chebyshev polynomials of the second kind. Some of the examples that follow are just  particular cases  of these two.

\begin{example}\label{primjer}
\begin{enumerate}
\item If $x>0$ and $y=1$, then  $a_{n+1}=F_{n}(x), \, n\geq 1$,
where $F_{n}(x)$ is the $n$th Fibonacci polynomial. Also, Equation~(\ref{pp1}) gives the
 explicit expression for $F_{n}(x)$ (see \cite{qF}):
\begin{displaymath}
F_{n}(x)=\sum_{k=0}^{\lfloor\frac {n-1}{2}\rfloor}{n-k-1\choose k}x^{n-2k-1}.
\end{displaymath}
In terms of restricted words, if $x>0$ is an integer, then $F_{n}(x)$ equals the number of words
of length $n$   over $\{0,1,\ldots,x-1\}$ in which $0$ avoids runs of  odd lengths.
\item
If  $x=2z$ and $y=-1$, then $a_n=U_{n}(z)$, where $U_n(z)$ is the Chebyshev polynomial
of the second kind. From Equation~(\ref{pp1}), we get the following well-known formula (see \cite{qQ}):

\begin{displaymath}
U_{n}(z)=\sum_{k=0}^{\lfloor \frac {n-1}{2}\rfloor}(-1)^k\cdot{n-k-1\choose
k}(2z)^{n-2k-1}.
\end{displaymath}

In terms of restricted words, if $z>0$ is an integer, then
 $U_{n+1}(z)$ equals the number of  words of length $n$ over the alphabet
$\{0,1,\ldots,2z-1\}$ avoiding the subword $01$.

 \item Particular case of (1) is when  $x=1$ and $y=1$. In this case we have $a_{n+1}=F_{n},\, n\geq 1$.
Also, Equation~(\ref{pp1}) is the  standard  expression for the Fibonacci numbers in terms of
the binomial coefficients (see Identity (54) of \cite{Va}):
\begin{displaymath}
F_{n}=\sum_{k=0}^{\lfloor\frac {n-1}{2}\rfloor}{n-k-1\choose k}.
\end{displaymath}
Combinatorially, the Fibonacci number $F_{n}$ equals the number of binary words of
length $n$ avoiding a run of zeros of odd length.
\item For $x=2$ and $y=1$,  we have $a_{n+1}=P_{n}$, where $P_n, \, n\geq 0$, is the $n$th Pell
number. From
Equation~(\ref{pp1}), we get (see \cite{Ho})
\begin{displaymath}
P_{n}=\sum_{k=0}^{\lfloor\frac {n-1}{2}\rfloor}2^{n-2k-1}\cdot {n-k-1\choose k}.
\end{displaymath}
Also,  $P_{n}$ equals the number of ternary words of  length $n$ in which $0$
avoids runs of odd lengths. The Pell numbers are sometimes  called  ``silver Fibonacci numbers".
\item For $x=1$ and $y=2$, we have $a_{n+1}=J_{n}$, where $J_n,\, ( n=0,1,2,\ldots)$, are the
Jacobsthal numbers.
From Equation~(\ref{pp1}), we obtain  (see \cite{Ho2})
\begin{displaymath}
J_{n}=\sum_{k=0}^{\lfloor\frac {n-1}{2}\rfloor}2^k{n-k-1\choose k-1}.
\end{displaymath}

Also, the number $J_{n}$ equals the number of ternary words of length $n$ in which
$0$ and $1$ avoid runs of odd lengths.
\item If $x=2$ and $y=2$, then
$a_{n+1}$ is the number of ways to tile a board of length $n$ using tiles of two colors of length 1 and 2. Also,
\begin{displaymath}
a_{n+1}=\sum_{k=0}^{\lfloor\frac {n-1}{2}\rfloor}2^{n-k-1}\cdot {n-k-1\choose
k},\,\, n>0.
\end{displaymath}
\item If $x=2$ and $y=-1$, then  $a_0=0,\, a_1=1,$ and $a_{n+1}=2a_n-a_{n-1}$,
which is the recurrence for non-negative integers. Thus, we have 
\begin{displaymath}
n=\sum_{k=0}^{\lfloor\frac {n-1}{2}\rfloor}(-1)^k\cdot 2^{n-2k-1}{n-k-1\choose
k},\,\, n>0.
\end{displaymath}
This formula for $n$ may seem rather complex, but its combinatorial meaning is very
simple.
Namely, $n$ equals the number of binary words of length $n-1$ avoiding
$01$, which is obvious.
\item  If $x=3$ and $y=-1$, then we  have that $a_{n+1}$ is the bisection of Fibonacci numbers, that is, $a_{n+1}=F_{2n}$. From Equation~(\ref{pp1}), we obtain 
\begin{displaymath}
F_{2n}=\sum_{k=0}^{\lfloor\frac {n-1}{2}\rfloor}(-1)^k\cdot 3^{n-2k-1}\cdot
{n-k-1\choose
k}.
\end{displaymath}
Also, $F_{2n}$ equals the number of ternary words of length $n-1$ avoiding $01$.
\item If $x=3$ and $y=-2$, then $a_n=2^n-1$. These numbers are usually called 
Mersenne numbers.
We have 
\begin{displaymath}
2^{n}-1=\sum_{k=0}^{\lfloor\frac{ n-1}{2}\rfloor}(-2)^k\cdot 3^{n-2k-1}\cdot
{n-k-1\choose
k}.
\end{displaymath}
Also, $2^{n+1}-1$ equals the number of ternary words of length $n$ avoiding $01$ and $02$.

\item If $x=4$ and $y=-3$, then $a_n=\frac{3^n-1}{2}$.
Next we have
\begin{displaymath}
\frac{3^n-1}{2}=\sum_{k=0}^{\lfloor\frac{ n-1}{2}\rfloor}(-3)^k\cdot 4^{n-2k-1}\cdot
{n-k-1\choose
k}.
\end{displaymath}
Also, $\frac{3^n-1}{2}$ equals the number of
quaternary words of length $n$ avoiding $01, 02$ and $03$.

\end{enumerate}
\end{example}

\section{Identities}

Unless stated otherwise, throughout this section, we  assume that $(a)$ is a sequence satisfying Equation~(\ref{rr}) and Equation~(\ref{rr2}), and that
$(b)=(b_0,b_1,\ldots)$ and $(c)=(c_0,c_1,\ldots)$ both satisfy Equation~(\ref{rr}). Note that we do not have any assumptions about the initial conditions for the sequences $(b)$ and $(c)$.

In our previous paper \cite{ja1}, we defined the notion of an $n$-determinant, and used it to derive numerous identities related to some sequences given by linear homogeneous recurrences of the second order. Here, we use the results obtained via $2$-determinants to derive a number of identities related to some classical sequences. For the definition of an $n$-determinant and related results, we refer the reader to \cite{ja1}. We remark here that the matrix methods are widely used when it comes to proving some of the classical identities. For example, see \cite{JO} for a demonstration of how powerful these methods can be in simplifying proofs of some of the identities.

The starting point of our investigation is the following general theorem that relates sequences satisfying the same linear homogeneous recurrence of the second order.

\begin{theorem}[\cite{ja2}, Proposition 8] Let $(u)=(u_0,u_1,\ldots)$ and $(v)=(v_0,v_1,\ldots)$ be any two sequences, and let $(b)=(b_0,b_1,\ldots)$ and $(c)=(c_0,c_1,\ldots)$ be two sequences both satisfying  the same recurrence:
\begin{align*}b_{n+1}&=u_{n}b_{n}+v_{n-1}\cdot b_{n-1},\,\,\, n>0,\\
                     c_{n+1}&= u_{n}c_{n}+v_{n-1}\cdot c_{n-1},\,\,\, n>0.
\end{align*}
Then, for $k\leq n+2$, we have 
\begin{equation}\label{prop8}
\begin{vmatrix}b_k&b_{n+2}\\c_k&c_{n+2}
\end{vmatrix}=(-1)^{k}\cdot v_1\cdot v_2\cdots v_k\cdot  a_{n-k+2}\cdot\begin{vmatrix}b_{0}&b_{1}\\c_{0}&c_{1}
\end{vmatrix},
\end{equation}
where $a_0=0$, $a_1=1$, $a_2=u_{k+1}$, $a_i=v_{k+i-2}a_{i-2}+u_{k+i-1}a_{i-1}$, $i>2$.
\end{theorem}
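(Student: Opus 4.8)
The plan is to prove the identity in two reductions, writing $D_{p,q}=b_pc_q-b_qc_p$ for the determinant whose columns are indexed by $p$ and $q$, so that the claim reads $D_{k,n+2}=(-1)^k v_1v_2\cdots v_k\,a_{n-k+2}\,D_{0,1}$. The structural fact I would exploit is that, since $(b)$ and $(c)$ obey the \emph{same} recurrence, each later term is determined by two consecutive earlier terms through multipliers that depend only on $(u)$ and $(v)$; this common multiplier acts on both columns and is precisely what collapses all dependence on the unknown sequences $(b),(c)$ into the single factor $D_{0,1}$. Concretely, I would first show that $D_{k,n+2}=a_{n-k+2}\,D_{k,k+1}$, then that $D_{k,k+1}=(-1)^k v_1v_2\cdots v_k\,D_{0,1}$, and multiply the two.

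For the first reduction, fix $k$ and set $d_j=D_{k,k+j}=b_kc_{k+j}-b_{k+j}c_k$ for $j\ge 0$, so that $d_0=0$ and $d_1=D_{k,k+1}$. For $j\ge 2$ I would substitute $c_{k+j}=u_{k+j-1}c_{k+j-1}+v_{k+j-2}c_{k+j-2}$ and the matching relation for $b_{k+j}$ into $d_j$; after regrouping, the terms carrying $b_k$ and $c_k$ reassemble into smaller determinants and yield
\begin{equation*}
d_j=u_{k+j-1}\,d_{j-1}+v_{k+j-2}\,d_{j-2}.
\end{equation*}
This is exactly the recurrence defining $a_j$, with the base cases matching as well: $d_2=u_{k+1}d_1$ agrees with $a_2=u_{k+1}$, the $v_k d_0$ term vanishing because $d_0=0$. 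Hence $d_j$ and $a_j\,d_1$ satisfy the same second-order recurrence in $j$ with the same initial values $d_0=0=a_0d_1$ and $d_1=a_1d_1$, so induction on $j$ gives $d_j=a_j\,d_1$. Taking $j=n-k+2$ (so that $k+j=n+2$) produces $D_{k,n+2}=a_{n-k+2}\,D_{k,k+1}$.

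For the second reduction I would apply the same substitution to the two index-$(k+1)$ entries of $D_{k,k+1}$: replacing $b_{k+1}$ and $c_{k+1}$ by $u_kb_k+v_{k-1}b_{k-1}$ and $u_kc_k+v_{k-1}c_{k-1}$, the $u_k$-terms cancel and one is left with $D_{k,k+1}=-v_{k-1}D_{k-1,k}$, valid for $k\ge1$. Iterating this one-step relation lowers the index pair down to $(0,1)$, each step contributing a single factor of the form $-v_{\bullet}$, so that after $k$ steps one obtains $D_{k,k+1}=(-1)^k v_1v_2\cdots v_k\,D_{0,1}$; the case $k=0$ is the trivial identity $D_{0,1}=D_{0,1}$ with an empty product. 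Combining the two reductions gives the stated formula.

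The main obstacle I anticipate is purely the index bookkeeping in these two inductions: checking that the coefficients in the recurrence for $d_j$ are shifted by exactly $k$ so as to coincide with the definition of $a_j$ (and that the prescribed base values $a_0=0$, $a_1=1$, $a_2=u_{k+1}$ line up with $d_0,d_1,d_2$), and tracking the subscripts of the $v$'s through the telescoping so that the sign $(-1)^k$ and the product emerge exactly as written. Everything else follows formally from the hypothesis that $(b)$ and $(c)$ share the same recurrence, which is what forces a common multiplier on both columns and reduces the computation to the determinant $D_{0,1}$ of the initial terms.
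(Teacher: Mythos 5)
The paper never proves this theorem --- it is imported from \cite{ja2} without argument --- so there is no in-paper proof to compare against; your two-step reduction is the natural self-contained argument and is essentially sound. The first reduction is correct as written: substituting the recurrence into the second column gives $d_j=u_{k+j-1}d_{j-1}+v_{k+j-2}d_{j-2}$, this coincides with the stated defining recurrence of $(a_i)$ (including the base cases $a_0=0$, $a_1=1$, $a_2=u_{k+1}$), and the induction yields $D_{k,n+2}=a_{n-k+2}\,D_{k,k+1}$ for all $0\le k\le n+2$.

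The problem is in the last line of your second reduction. From the recurrence as stated, $b_{k+1}=u_kb_k+v_{k-1}b_{k-1}$, your one-step relation is $D_{k,k+1}=-v_{k-1}D_{k-1,k}$ --- correct --- but telescoping it down to $(0,1)$ contributes the factors $-v_{k-1},-v_{k-2},\ldots,-v_0$, so that
\begin{equation*}
D_{k,k+1}=(-1)^k\,v_0v_1\cdots v_{k-1}\,D_{0,1},
\end{equation*}
not $(-1)^k\,v_1v_2\cdots v_k\,D_{0,1}$ as you wrote. The case $k=1$, $n=0$ already separates the two: direct substitution gives $D_{1,2}=-v_0D_{0,1}$, whereas the displayed formula asserts $-v_1D_{0,1}$. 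What your computation actually exposes is that the statement as transcribed here is internally inconsistent: the recurrence it prescribes for $(a_i)$ uses the coefficients $v_{k+i-2}$, which matches the convention $b_{n+1}=u_nb_n+v_{n-1}b_{n-1}$ and therefore forces the product $v_0\cdots v_{k-1}$, while the product $v_1\cdots v_k$ would correspond to the convention $b_{n+1}=u_nb_n+v_nb_{n-1}$. (The mismatch is invisible in every application made in this paper, since there $v_n\equiv y$ and both products equal $y^k$.) You should state the telescoped product as your own derivation yields it, $v_0v_1\cdots v_{k-1}$, and flag the index discrepancy with the quoted statement, rather than silently conforming your conclusion to it.
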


The basic result that we use to investigate sequences of numbers is the following  direct corollary of the previous theorem.
\begin{theorem}[A generalized identity of d'Ocagne] Let $(a)$ be a sequence satisfying Equation~(\ref{rr}) and Equation~(\ref{rr2}), and let
$(b)=(b_0,b_1,\ldots)$ and $(c)=(c_0,c_1,\ldots)$ both satisfy Equation~(\ref{rr}). Then
\begin{equation}\label{oc}
\begin{vmatrix}b_k&b_{k+m}\\c_k&c_{k+m}
\end{vmatrix}=(-y)^{k}\cdot a_{m}\cdot\begin{vmatrix}b_{0}&b_{1}\\c_{0}&c_{1}
\end{vmatrix}.
\end{equation}
\end{theorem}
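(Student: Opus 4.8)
The plan is to obtain \eqref{oc} as a direct specialization of the preceding theorem (Proposition~8 of \cite{ja2}), in which the coefficient sequences $(u)$ and $(v)$ are taken to be constant. Since $(b)$ and $(c)$ both satisfy \eqref{rr}, they fit the hypotheses of that theorem with $u_n=x$ and $v_n=y$ for every $n$. First I would record the immediate consequences of this choice: the product of the $v$'s collapses to $v_1 v_2 \cdots v_k = y^k$, so that the scalar prefactor $(-1)^k v_1\cdots v_k$ becomes $(-1)^k y^k = (-y)^k$, which already matches the prefactor on the right-hand side of \eqref{oc}.

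The key step is to check that the auxiliary sequence produced by the theorem coincides with our sequence $(a)$. In the general statement this sequence is defined by $a_0=0$, $a_1=1$, $a_2=u_{k+1}$, and $a_i=v_{k+i-2}a_{i-2}+u_{k+i-1}a_{i-1}$ for $i>2$, and a priori its definition depends on the shift $k$ through the indices of $u$ and $v$. The point is that once $u$ and $v$ are constant, this dependence disappears: $u_{k+1}=x$ and $v_{k+i-2}=y$, $u_{k+i-1}=x$ for all $i$ and all $k$, so the recurrence reduces to $a_2=x$ and $a_i=y\,a_{i-2}+x\,a_{i-1}$, with $a_0=0$, $a_1=1$. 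This is precisely the recurrence \eqref{rr} together with the initial conditions \eqref{rr2}, so the auxiliary sequence is exactly $(a)$, independently of $k$.

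Finally I would align the indices. Equation~\eqref{prop8} places $b_{n+2}$ and $c_{n+2}$ in the second column and $a_{n-k+2}$ on the right; setting $n=k+m-2$ turns these into $b_{k+m}$, $c_{k+m}$, and $a_m$, respectively, while the standing hypothesis $k\le n+2$ becomes the harmless condition $m\ge 0$. Substituting all of this into \eqref{prop8} yields \eqref{oc} verbatim. I do not expect any genuine obstacle here, since the argument is purely a matter of specialization; the one place that repays care is the verification that the $k$-shifted auxiliary recurrence in \eqref{prop8} really does collapse to the single, $k$-independent sequence $(a)$, because it is this collapse that makes the clean factor $a_m$ appear on the right-hand side.
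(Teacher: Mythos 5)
Your proposal is correct and follows exactly the paper's own route: the paper's proof is the one-line specialization $u_n=x$, $v_n=y$, $n+2=k+m$ of the preceding theorem, which is precisely what you carry out (with the welcome extra care of checking that the $k$-shifted auxiliary sequence collapses to the $k$-independent sequence $(a)$).
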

\begin{proof} The statement follows from the previous theorem by setting, for all $n$, $u_n=x$, $v_n=y$, and $n+2=k+m$.
\end{proof}

The importance of Equation~(\ref{oc}) lies in the fact that there is an extra term $a_m$ on the right-hand side of the formula. This gives us a lot of freedom in choosing concrete values of the sequence $a_m$, yielding many identities as special cases of  Equation~(\ref{oc}). We will derive some of these identities now. Note that  when it comes to the indices that appear on the left-hand side of Equation~(\ref{oc}), they are not as general as indices appearing in some well-known identities, such as Identity $(70)$ of \cite{JO}, but the extra term $a_m$ on the right-hand side makes up for the lack of full generality of the indices. Moreover, there are four different parameters appearing as indices in the identity given in Theorem~\ref{fpt}. Extra parameters allow us to derive a number of identities concerning classical sequences as special cases of this particular identity.

In Section 4 of \cite{ja1}, we stated a number of d'Ocagne's identities for Fibonacci
numbers and polynomials, Lucas and Chebyshev polynomials.
We illustrate Equation~(\ref{oc}) by
deriving identities for some sequences described in the preceding section.

To clarify the name of the identity given by Equation~(\ref{oc}), we prove that
d'Ocagne's identity for Fibonacci
numbers is a particular case of  this identity.
\begin{corollary}[d'Ocagne's identity] The following formula holds
\begin{displaymath}
\begin{vmatrix}F_{k+1}&F_{k+m+1}\\F_{k}&F_{k+m}\end{vmatrix}=(-1)^{k}\cdot F_{m},
\end{displaymath}
\end{corollary}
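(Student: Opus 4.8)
The plan is to obtain the corollary as a direct specialization of Equation~(\ref{oc}), with no new machinery required. First I would fix $x=1$ and $y=1$, so that the defining sequence $(a)$, determined by $a_0=0$, $a_1=1$ and $a_{n+1}=a_n+a_{n-1}$, is exactly the Fibonacci sequence; thus $a_n=F_n$ for all $n$, and in particular $a_m=F_m$. This identification of the index convention is the one point worth stating explicitly, because it is precisely the factor $a_m$ in Equation~(\ref{oc}) that will produce the $F_m$ on the right-hand side of the claimed identity.

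Next I would choose the two sequences $(b)$ and $(c)$ appearing in Equation~(\ref{oc}) so that its left-hand determinant matches the one in the statement. Setting $b_n=F_{n+1}$ and $c_n=F_n$ for all $n\ge 0$, both sequences satisfy Equation~(\ref{rr}) with $x=y=1$, since the Fibonacci recurrence is shift-invariant and $F_{n+2}=F_{n+1}+F_n$. With this choice the left-hand side of Equation~(\ref{oc}) becomes
\[
\begin{vmatrix}b_k&b_{k+m}\\c_k&c_{k+m}\end{vmatrix}=\begin{vmatrix}F_{k+1}&F_{k+m+1}\\F_{k}&F_{k+m}\end{vmatrix},
\]
which is exactly the determinant in the corollary.

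It then remains only to evaluate the constant factor on the right-hand side of Equation~(\ref{oc}). The relevant $2\times 2$ determinant of initial terms is
\[
\begin{vmatrix}b_0&b_1\\c_0&c_1\end{vmatrix}=\begin{vmatrix}F_1&F_2\\F_0&F_1\end{vmatrix}=\begin{vmatrix}1&1\\0&1\end{vmatrix}=1.
\]
Substituting $a_m=F_m$ and this value into Equation~(\ref{oc}) yields $(-y)^k\cdot a_m\cdot 1=(-1)^k F_m$, which is precisely the asserted identity.

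I do not expect any genuine obstacle here: the entire argument is a substitution into the already-proved master identity. The only thing demanding care is the bookkeeping of the Fibonacci index shift relating $(b)$, $(c)$ and $(a)$ — one must confirm that $b_n=F_{n+1}$ and $c_n=F_n$ really do satisfy the same second-order recurrence as $(a)$, and that the initial determinant evaluates to $1$ rather than $-1$, since a sign slip there would flip the right-hand side and break the match with the classical statement.
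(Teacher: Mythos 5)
Your proposal is correct and follows exactly the paper's own route: specialize Equation~(\ref{oc}) to $x=y=1$ so that $a_m=F_m$, take $b_n=F_{n+1}$ and $c_n=F_n$, and check that the initial determinant equals $1$. The only difference is that you spell out the verification of the initial determinant explicitly, which the paper leaves implicit.
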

\begin{proof}
When $x=y=1$,  Equation (\ref{oc}) becomes
the recurrence for  Fibonacci numbers. Hence, $a_{m}=F_{m}$,   $m\geq 0$.
For sequences $(b)$ and  $(c)$, we again choose the Fibonacci numbers with the
initial
conditions such that the
determinant on the right-hand side of Equation~(\ref{oc}) is equal to $1$. For
instance, we
can choose $c_0=0,c_1=1$, and $b_0=1,b_1=1$, that is,
$b_k=F_{k+1}$  and $c_k=F_{k}$. We thus obtain
 d'Ocagne's identity.
\end{proof}

We note one more consequence of Equation~(\ref{oc}). Namely, if we know sequences $(b)$ and $(c)$ from the previous theorem, we can determine the sequence $(a)$. 
\begin{corollary}If $(a)$, $(b)$, and $(c)$ are sequences satisfying (\ref{oc}), then the
members of the sequence $(a)$ are  rational functions of numbers
$b_i,c_i,(i=0,1,\ldots)$, with denominator equal to $(-y)^k\cdot(b_0c_1-b_1c_0)$.
\end{corollary}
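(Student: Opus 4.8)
The plan is to read Equation~(\ref{oc}) not as an identity to be verified but as a single linear equation in the one unknown $a_m$, and then solve for it. First I would expand the left-hand determinant, so that Equation~(\ref{oc}) becomes
\begin{equation*}
b_k c_{k+m} - b_{k+m} c_k = (-y)^{k}\cdot a_m \cdot (b_0 c_1 - b_1 c_0).
\end{equation*}
The quantity multiplying $a_m$ on the right is the constant $(-y)^{k}(b_0 c_1 - b_1 c_0)$, which contains no occurrence of $a_m$, so as soon as this constant is nonzero I can divide through and obtain
\begin{equation*}
a_m = \frac{b_k c_{k+m} - b_{k+m} c_k}{(-y)^{k}\,(b_0 c_1 - b_1 c_0)}.
\end{equation*}
This already exhibits $a_m$ as a rational function of the entries $b_i,c_i$ whose denominator is exactly $(-y)^{k}(b_0 c_1 - b_1 c_0)$, which is the assertion. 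Since $m$ is arbitrary and $k$ is any admissible index, every member of $(a)$ is recovered this way, and the corollary follows at once from Equation~(\ref{oc}).

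The only point that genuinely needs care — and hence the main obstacle, modest as it is — is the nonvanishing of the denominator. The division is legitimate precisely when $y \neq 0$, so that $(-y)^{k} \neq 0$, and when the initial determinant $b_0 c_1 - b_1 c_0 \neq 0$. The first condition holds automatically for the recurrences we consider. The second is the statement that the initial vectors $(b_0,b_1)$ and $(c_0,c_1)$ are linearly independent; if it fails, the sequences $(b)$ and $(c)$ are proportional, both sides of Equation~(\ref{oc}) vanish identically, and the identity degenerates to $0=0$, giving no information about $(a)$. I would therefore state the corollary under the standing nondegeneracy hypothesis $b_0 c_1 - b_1 c_0 \neq 0$, which is the same condition already implicit in treating Equation~(\ref{oc}) as a d'Ocagne-type identity.

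As an optional closing remark I could observe that even $y$ need not be regarded as external data, since it too is determined by $(b)$ and $(c)$: writing the recurrences for $(b)$ and $(c)$ at a common index as a $2\times 2$ linear system in $x$ and $y$ and applying Cramer's rule yields $y = (b_n c_{n+1} - b_{n+1} c_n)/(b_n c_{n-1} - b_{n-1} c_n)$ whenever $b_n c_{n-1} - b_{n-1} c_n \neq 0$. Substituting this into the boxed expression would render $a_m$ a rational function of the $b_i$ and $c_i$ alone; I would mention this only as a refinement, as the stated form with $(-y)^k$ in the denominator is exactly what the corollary claims.
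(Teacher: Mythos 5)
Your proposal is correct and is exactly the argument the paper intends (the corollary is stated there without an explicit proof, being an immediate consequence of solving Equation~(\ref{oc}) for $a_m$). Your added remark that one must assume $y\neq 0$ and $b_0c_1-b_1c_0\neq 0$ is a worthwhile precision that the paper leaves implicit.
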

\begin{remark}
Besides d'Ocagne's identity, three of the most important identities
are: Cassini's, Catalan's, and Vajda's. All these identities can be derived from the generalized identity of d'Ocagne (\ref{oc}).
What we show here is that these important identities hold for each sequence satisfying (\ref{rr}) and (\ref{rr2}), i.e.\ they are, in a sense, consequences of the homogenous linear recurrence of the second order, not of the particular coefficients.
\end{remark}

For  $m=1$, we have $a_1=1$, so that from Equation~(\ref{oc}) we get the following identity (see Identity (70) of \cite{JO}).
\begin{proposition}[A Cassini-like identity] The following formula holds
\begin{displaymath}
\begin{vmatrix}b_k&b_{k+1}\\c_k&c_{k+1}\end{vmatrix}
=(-y)^{k}\cdot\begin{vmatrix}b_0&b_{1}\\c_0&c_{1}\end{vmatrix}.
\end{displaymath}
\end{proposition}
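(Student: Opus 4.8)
The plan is to obtain this identity as the special case $m=1$ of the generalized d'Ocagne identity in Equation~(\ref{oc}), which I may assume has already been established. First I would substitute $m=1$ into Equation~(\ref{oc}). The left-hand determinant then reads $\begin{vmatrix}b_k & b_{k+1}\\ c_k & c_{k+1}\end{vmatrix}$, which is exactly the determinant appearing in the claimed formula, so no rearrangement of the left-hand side is required.

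Next I would evaluate the factor $a_m$ on the right-hand side at $m=1$. Since the sequence $(a)$ satisfies the initial condition (\ref{rr2}), we have $a_1 = 1$ regardless of the values of $x$ and $y$. Substituting this into Equation~(\ref{oc}) collapses the right-hand side to $(-y)^{k}\cdot 1\cdot \begin{vmatrix}b_0 & b_1\\ c_0 & c_1\end{vmatrix}$, which is precisely the right-hand side of the proposition. Comparing the two sides then yields the stated formula directly.

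There is essentially no obstacle here: the full content of the statement is already packaged inside Equation~(\ref{oc}), and the only thing to verify is the normalization $a_1 = 1$, which is immediate from (\ref{rr2}). The single point that deserves a moment's attention is merely that the value $m=1$ is admissible under the hypotheses of the theorem yielding Equation~(\ref{oc}) --- that is, that specializing $m$ to $1$ respects any implicit constraint on the indices --- but since $k$ and $m$ range over all values for which the determinant entries are defined, this causes no difficulty.
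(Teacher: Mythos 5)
Your proposal is correct and matches the paper's argument exactly: the paper also obtains this proposition by setting $m=1$ in Equation~(\ref{oc}) and using the normalization $a_1=1$ from (\ref{rr2}). Nothing further is needed.
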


It is easy to see that for  $x=y=1,b_0=1,b_1=1$, $c_0=0,$ and $c_1=1$,
we obtain the  standard Cassini's identity for Fibonacci numbers.

We now consider the Lucas numbers $L_0=2,L_1=1,L_2=3,\ldots$. We know that these numbers satisfy the same recurrence as the Fibonacci numbers do.
We thus take $x=y=1$ and obtain the following identity (see Identity $(16b)$ of \cite{Va}).
\begin{identity}[A Cassini-like identity for Fibonacci and Lucas numbers]
    \begin{displaymath}
  L_k\cdot F_{k+1}-L_{k+1}\cdot F_k=2\cdot (-1)^k.
    \end{displaymath}
\end{identity}

   In Example \ref{primjer},  we have seen  that the Jacobsthal numbers $J_n,(n=0,1,2,\ldots)$, are obtained from Equation~(\ref{pp1}) for $x=1,y=2$.  If we take $b_0=J_1,b_1=J_2,c_0=J_0,c_1=J_1$, we obtain
    the following identity (see Identity $(2.5)$ of \cite{Ho2}).
    \begin{identity}[A Cassini-like identity for Jacobsthal numbers]
    \begin{displaymath}
  {J }^2_{k+1}-J_k\cdot  J_{k+2}=(-2)^{k}.\end{displaymath}
\end{identity}

Since Chebyshev polynomials $T_n(z)$ of the first kind satisfy the same recurrence as $U_n(x)$ do, by taking
\begin{displaymath}
T_0(0)=1,T_1(1)=z, x=2z,y=-1,
\end{displaymath}
we obtain the following identity. 
\begin{identity}[A Cassini-like identity for Chebyshev polynomials of the first kind]
    \begin{displaymath}
T^2_{k+1}(z)-T_k(z)\cdot T_{k+2}(z)
=1-z^2.\end{displaymath}
\end{identity}

 Next, we assume that $k\geq p$. If we replace $k$ by $k-p$ in Equation~(\ref{oc}), then we get
\begin{displaymath}
\begin{vmatrix}b_{k-p}&b_{k+m-p}\\c_{k-p}&c_{k+m-p}\end{vmatrix}=(-y)^{k-p}\cdot
a_{m}\cdot
\begin{vmatrix}b_0&b_{1}\\c_0&c_{1}\end{vmatrix}.
\end{displaymath}
If we apply Equation~(\ref{oc}) to the right-hand side of the previous equality, we obtain the following universal
property. This is a well known index reduction formula (cf.\ Identity $(70)$ of \cite{JO}).
 \begin{proposition}[Index reduction formula] If $k\geq p$, then
\begin{displaymath}
\begin{vmatrix}b_k&b_{k+m}\\c_k&c_{k+m}\end{vmatrix}=(-y)^{p}\cdot
\begin{vmatrix}b_{k-p}&b_{k-p+m}\\c_{k-p}&c_{k-p+m}\end{vmatrix}.
\end{displaymath}
\end{proposition}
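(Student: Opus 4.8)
The plan is to obtain this as an immediate double application of the generalized identity of d'Ocagne, Equation~(\ref{oc}). Write $D=\begin{vmatrix}b_0&b_1\\c_0&c_1\end{vmatrix}$ for the fixed determinant of initial terms. First I would invoke Equation~(\ref{oc}) verbatim to record the left-hand determinant as $(-y)^{k}\cdot a_m\cdot D$. The crucial feature to exploit is that the factor $a_m\cdot D$ on the right-hand side of Equation~(\ref{oc}) is independent of the first index $k$; that index is registered only by the power of $-y$. Hence the same quantity $a_m\cdot D$ resurfaces no matter where the pair of columns begins, and this is exactly what permits a shift of the index.

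Second, since the hypothesis $k\geq p$ guarantees $k-p\geq 0$, Equation~(\ref{oc}) remains legitimate after replacing $k$ by $k-p$, giving
\[
\begin{vmatrix}b_{k-p}&b_{k-p+m}\\c_{k-p}&c_{k-p+m}\end{vmatrix}=(-y)^{k-p}\cdot a_m\cdot D.
\]
Multiplying this equality by $(-y)^{p}$ converts its right-hand side into $(-y)^{k}\cdot a_m\cdot D$, which by the first step is precisely the left-hand determinant in the indices $k$ and $k+m$. Equating the two expressions yields the asserted formula.

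I would deliberately proceed by multiplying by $(-y)^{p}$ rather than by cancelling the common factor $(-y)^{k-p}\cdot a_m\cdot D$: the quantities $a_m$, $D$, and indeed $-y$ itself may vanish for particular choices of the sequences or of the parameter $y$, so division is not always justified, whereas multiplication by $(-y)^{p}$ is unconditionally valid and keeps all degenerate cases consistent. The only hypothesis doing genuine work is $k\geq p$, which is needed so that $k-p$ is a nonnegative index for which Equation~(\ref{oc}) has meaning. With that in place there is no real obstacle; the statement is a one-line consequence of reading Equation~(\ref{oc}) at the two starting indices $k$ and $k-p$ and comparing.
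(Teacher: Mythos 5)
Your proof is correct and follows essentially the same route as the paper: both apply Equation~(\ref{oc}) at the starting indices $k$ and $k-p$ and compare the two resulting expressions $(-y)^{k}\cdot a_m\cdot D$ and $(-y)^{k-p}\cdot a_m\cdot D$. Your extra care in multiplying by $(-y)^{p}$ rather than cancelling the common factor is a minor refinement of the same argument, not a different approach.
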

In particular, if $k=p$, then by using the index reduction formula, we can  write d'Ocagne's identity in the form:
\begin{displaymath}
\begin{vmatrix}b_k&b_{k+m}\\c_k&c_{k+m}
\end{vmatrix}=(-y)^{k}\cdot \begin{vmatrix}b_{0}&b_{m}\\c_{0}&c_{m}\end{vmatrix}.
\end{displaymath}
By comparing the last equality with Equation~(\ref{oc}), we get the following identity.
\begin{proposition}[A reduced identity of d'Ocagne]
\begin{displaymath}
a_m\cdot\begin{vmatrix} b_{0}&b_{1}\\c_{0}&c_{1}\end{vmatrix}=
\begin{vmatrix} b_{0}&b_{m}\\c_{0}&c_{m}\end{vmatrix}.
\end{displaymath}
\end{proposition}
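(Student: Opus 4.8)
The plan is to exhibit two different closed forms for one and the same $2\times2$ determinant and then read off the identity by equating them. The text preceding the proposition has already packaged, as consequences of Equation~(\ref{oc}), two descriptions of the determinant $\begin{vmatrix}b_k&b_{k+m}\\c_k&c_{k+m}\end{vmatrix}$. On the one hand, Equation~(\ref{oc}) itself expresses it as $(-y)^{k}\cdot a_{m}\cdot\begin{vmatrix}b_{0}&b_{1}\\c_{0}&c_{1}\end{vmatrix}$. On the other hand, specializing the index reduction formula to $p=k$ (so that $k-p=0$) rewrites the very same determinant as $(-y)^{k}\cdot\begin{vmatrix}b_{0}&b_{m}\\c_{0}&c_{m}\end{vmatrix}$, which is precisely the reduced form of d'Ocagne's identity displayed immediately above the proposition.

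First I would set these two expressions equal, obtaining $(-y)^{k}\cdot a_{m}\cdot\begin{vmatrix}b_{0}&b_{1}\\c_{0}&c_{1}\end{vmatrix}=(-y)^{k}\cdot\begin{vmatrix}b_{0}&b_{m}\\c_{0}&c_{m}\end{vmatrix}$, an equality holding for every admissible $k$. Dividing through by the common factor $(-y)^{k}$ then leaves exactly $a_{m}\cdot\begin{vmatrix}b_{0}&b_{1}\\c_{0}&c_{1}\end{vmatrix}=\begin{vmatrix}b_{0}&b_{m}\\c_{0}&c_{m}\end{vmatrix}$, which is the claim.

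There is essentially no obstacle here: all of the content has already been absorbed into the index reduction formula and into Equation~(\ref{oc}), so the argument reduces to a one-line comparison of two formulas for the same quantity. The only point deserving a word of care is the cancellation of $(-y)^{k}$, which requires $y\neq0$; this is harmless, since $y\neq0$ is exactly the condition that makes~(\ref{rr}) a genuine second-order recurrence rather than a degenerate first-order one. In fact, one can sidestep the cancellation entirely by taking $k=0$ directly in Equation~(\ref{oc}): since $(-y)^{0}=1$, this instantly yields $\begin{vmatrix}b_{0}&b_{m}\\c_{0}&c_{m}\end{vmatrix}=a_{m}\cdot\begin{vmatrix}b_{0}&b_{1}\\c_{0}&c_{1}\end{vmatrix}$, so the comparison route and the direct substitution route agree.
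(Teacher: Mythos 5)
Your proposal is correct and follows essentially the same route as the paper: the authors likewise specialize the index reduction formula to $p=k$ to get $(-y)^{k}\cdot\begin{vmatrix} b_{0}&b_{m}\\c_{0}&c_{m}\end{vmatrix}$ for the determinant and then compare with Equation~(\ref{oc}). Your added remark about the cancellation of $(-y)^k$ (or avoiding it by taking $k=0$ directly) is a small, sensible refinement of the same argument.
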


We illustrate this formula with two identities. The first identity concerns Fibonacci numbers. 
\begin{identity} For arbitrary non-negative  integers $m,\, p$, and $q$, where $m>p$, the following holds
\begin{displaymath}
F_m\cdot\begin{vmatrix} F_p&F_{p+1}\\F_{q}&F_{q+1}\end{vmatrix}=
\begin{vmatrix} F_{p}&F_{m+p}\\F_{q}&F_{m+q}\end{vmatrix}.
\end{displaymath}
\end{identity}
The next identity concerns Chebyshev polynomials $U_n(x)$ of the second kind. 
\begin{identity}  For arbitrary non-negative  integers $m,\, p$, and $q$, where $m>p$, the following holds
\begin{displaymath}
U_m(x)\cdot\begin{vmatrix} U_p(x)&U_{p+1}(x)\\U_{q}(x)&U_{q+1}(x)\end{vmatrix}=
\begin{vmatrix} U_{p}(x)&U_{m+p}(x)\\U_{q}(x)&U_{m+q}(x)\end{vmatrix}.
\end{displaymath}
\end{identity}

In the next result, we introduce two more parameters in the formula
(\ref{oc}). In this theorem we take $(c)=(a)$.
\begin{theorem}[Four parameter theorem]\label{fpt} For $m\geq k,p\geq q$, the
following formula holds
\begin{displaymath}
 \begin{vmatrix}b_{k+p}&b_{m+p}\\a_{k+q}&a_{m+q}
\end{vmatrix}
=(-y)^{k+q}\cdot a_{m-k}\cdot b_{p-q}.
\end{displaymath}
\end{theorem}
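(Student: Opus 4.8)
The plan is to reduce the four-parameter determinant to the generalized d'Ocagne identity~(\ref{oc}) by passing to suitably shifted sequences. The key observation is structural: in the determinant $\begin{vmatrix}b_{k+p}&b_{m+p}\\a_{k+q}&a_{m+q}\end{vmatrix}$ the top row consists of $b$-terms shifted by $p$ and the bottom row of $a$-terms shifted by $q$, while within each row the two indices differ by the same amount $m-k$. Accordingly I would introduce $B_n=b_{n+p}$ and $A_n=a_{n+q}$. Since any shift of a solution of~(\ref{rr}) is again a solution, both $(B)$ and $(A)$ satisfy~(\ref{rr}), and the left-hand side becomes $\begin{vmatrix}B_k&B_{k+(m-k)}\\A_k&A_{k+(m-k)}\end{vmatrix}$, which is exactly the shape on the left of~(\ref{oc}).

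Applying~(\ref{oc}) to the pair $(B),(A)$ with horizontal gap $m-k$ (this is where the hypothesis $m\geq k$ is used, so that $a_{m-k}$ is defined) gives $(-y)^k\cdot a_{m-k}\cdot\begin{vmatrix}B_0&B_1\\A_0&A_1\end{vmatrix}=(-y)^k\, a_{m-k}\begin{vmatrix}b_p&b_{p+1}\\a_q&a_{q+1}\end{vmatrix}$. It then remains to evaluate this residual $2\times 2$ determinant and show that it equals $(-y)^q\, b_{p-q}$.

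The residual determinant still has mismatched vertical indices ($p$ in the top row against $q$ in the bottom), so~(\ref{oc}) cannot be applied to it directly; reconciling this offset is the main obstacle. The remedy is a second shift that aligns the two bases: set $E_n=b_{n+p-q}$, which is well-defined precisely because $p\geq q$, so that $E_q=b_p$ and $E_{q+1}=b_{p+1}$. The residual then equals $\begin{vmatrix}E_q&E_{q+1}\\a_q&a_{q+1}\end{vmatrix}$, now with common base $q$ and gap $1$. Applying the Cassini-like identity (the case $m=1$ of~(\ref{oc})) yields $(-y)^q\begin{vmatrix}E_0&E_1\\a_0&a_1\end{vmatrix}$, and since $a_0=0$ and $a_1=1$ this last determinant collapses to $E_0=b_{p-q}$. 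Multiplying the two factors produces $(-y)^{k+q}\, a_{m-k}\, b_{p-q}$, as claimed. I expect the only genuinely nonobvious move to be the introduction of $E_n=b_{n+p-q}$ to absorb the index difference $p-q$; everything else is a direct invocation of the already-established identities.
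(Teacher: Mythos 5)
Your proof is correct and follows essentially the same route as the paper: the paper applies its index reduction formula twice to strip off $(-y)^{k}$ and then $(-y)^{q}$, arriving at a determinant containing $a_0=0$ that collapses to $a_{m-k}\cdot b_{p-q}$, which is the same index-shifting computation you perform via (\ref{oc}) and its Cassini case. If anything, your version is slightly more careful, since you make explicit the passage to the shifted sequences $B_n=b_{n+p}$, $A_n=a_{n+q}$, $E_n=b_{n+p-q}$ (and why they still satisfy (\ref{rr})), whereas the paper applies its index reduction formula to a determinant whose two rows have different base indices, leaving that same shift implicit.
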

\begin{proof}
We only need to use the index reduction formula twice.
We have
\begin{displaymath}
 \begin{vmatrix}b_{k+p}&b_{m+p}\\a_{k+q}&a_{m+q}\end{vmatrix}=(-y)^k\cdot
\begin{vmatrix}b_{p}&b_{m+p-k}\\a_{q}&a_{m+q-k}
\end{vmatrix}.
\end{displaymath}
Applying the index reduction formula on the right-hand side of the last equation, we
obtain
\begin{displaymath}
 \begin{vmatrix}b_{k+p}&b_{m+p}\\a_{k+q}&a_{m+q}\end{vmatrix}=(-y)^{k+q}\cdot
\begin{vmatrix}b_{p-q}&b_{m+p-k-q}\\a_{0}&a_{m-k}
\end{vmatrix},
\end{displaymath}
and since $a_0=0$, the assertion follows.
\end{proof}

By replacing $m$ by $m+k$, $p$ by $p+q$, and finally $k+q$ by $k$, we obtain  the following  corollary of the previous  theorem.
\begin{identity}[{A Vajda-like identity}] 
For sequences $(a)$ and $(b)$ we have
\begin{equation}\label{tri}
\begin{vmatrix}b_{k+p}&b_{k+m+p}\\a_k&a_{k+m}
\end{vmatrix}=(-y)^{k}\cdot a_{m}\cdot b_p.
\end{equation}
\end{identity}
It is clear that, if $x=1,y=1$ and $a_i=F_i,\, b_i=F_i\, (i=0,1,\ldots)$, we obtain the well known Vajda's
identity for Fibonacci numbers.

Also, it is clear that, if $x=1,y=1$ and $a_i=F_i,\, b_i=L_i\, (i=0,1,\ldots)$,
where $L_i$ is the $i$th Lucas number, we obtain the following identity (see Identity (19b) in \cite{Va}).
\begin{identity}[A Vajda-like identity for Fibonacci and Lucas numbers]
\begin{displaymath}
\begin{vmatrix}L_{k+p}&L_{k+m+p}\\F_k&F_{k+m}
\end{vmatrix}=(-1)^{k}\cdot F_{m}\cdot L_p.
\end{displaymath}
\end{identity}

The following three identities are special cases of a Vajda-like identity for non-Fibonacci numbers.

\begin{identity}[A Vajda-like identity for Mersenne numbers] If $a_i=2^{i-1},(i\geq
1)$, then
\begin{displaymath}
\begin{vmatrix}2^{k+p}-1&2^{k+m+p}-1\\2^k-1&2^{k+m}-1
\end{vmatrix}=2^k\cdot(2^m-1)\cdot(2^p-1).
\end{displaymath}
\end{identity}

\begin{identity}[A Vajda-like identity for positive integers] If $a_i=i$, $i\geq 0,$ then
\begin{displaymath}
\begin{vmatrix}k+p&k+m+p\\k&k+m
\end{vmatrix}=m\cdot p.
\end{displaymath}
\end{identity}

 If we take $(a)=(b), m=p=r,k=n-r$ in (\ref{tri}), we obtain the following identity.

\begin{identity}[A Catalan-like identity]
\begin{displaymath}
\begin{vmatrix}a_{n}&a_{n+r}\\a_{n-r}&a_{n}
\end{vmatrix}=(-y)^{n-r}\cdot a_{r}^2.
\end{displaymath}
\end{identity}

It is clear that this identity generalizes the standard Catalan's identity for
Fibonacci numbers, which is obtained for $(a)=(F)$ and $y=1$.
We illustrate this case with several identities. The first identity is for Jacobsthal numbers (cf.\ \cite{Ho2}).
\begin{identity}[A Catalan-like  identity for Jacobsthal numbers] If $a_n=J_{n}$, then
\begin{displaymath}
J_{n}^2-J_{n-i}\cdot J_{n+i}=(-2)^{n-i}\cdot J_{i}^2.
\end{displaymath}
\end{identity}
\begin{identity}[A Catalan-like identity for Pell numbrs] If $a_n=P_{n}$, then
\begin{displaymath}
P_{n}^2-P_{n-i}\cdot P_{n+i}=(-1)^{n-i}\cdot P_{i}^2.
\end{displaymath}
\end{identity} See \cite{Ho} for this and some other identities involving Pell numbers.
\begin{identity}[A Catalan-like identity for $U_n(z)$] Assume that $a_n=U_{n}(z)$.
Then
\begin{displaymath}
U_{n}^2(z)-U_{n-i}(z)\cdot U_{n+i}(z)=(-1)^{n-i}\cdot U_{i}^2(z).
\end{displaymath}
\end{identity}
See \cite{Ud} for the direct proof of the previous identity.
\vspace{2mm}

Finally, we illustrate the identity from Theorem~\ref{fpt} by two well known identities for Fibonacci numbers. For the first identity, we set  $(a)=(b)=(F),\, p=m,\, q=k$ and $y=1$,  then we have the following result (\cite{Ru}, page 77).
 \begin{identity}
 \begin{displaymath}F_{k+m}^2-F_{m-k}^2=F_{2k}\cdot F_{2m}.
\end{displaymath}
\end{identity}
Also, if $p=m+1,q=k+1$, and $y=1$, then we have the identity from \cite{Sh}, page 63.
\begin{identity}
\begin{displaymath}F_{k+m+1}^2+F_{m-k}^2=F_{2k+1}\cdot F_{2m+1}.
\end{displaymath}
\end{identity}

\end{document}